\theoremstyle{plain}
\newtheorem{lemma}{Lemma}[section]
\newtheorem{theorem}[lemma]{Theorem}
\newtheorem{proposition}[lemma]{Proposition}
\newtheorem{definition}[lemma]{Definition}
\theoremstyle{remark}
\newtheorem{conjecture}[lemma]{Conjecture}
\newtheorem{observation}[lemma]{Observation}
\def\CCC{\mathcal{C}}  %
\def\SS{\mathcal{S}}   %
\def\NN{\mathbb N}
\def\R{\mathbb{R}}
\def\sc{\mathcal{C}}
\DeclareMathOperator{\occ}{occ}
\DeclareMathOperator{\pat}{pat}
\DeclareMathOperator{\conv}{conv}
\DeclareMathOperator{\coc}{c-occ} %
\DeclareMathOperator{\oc}{occ} %
\def\Z{\mathbb{Z}}
\def\pcoc{\widetilde{\coc}}
\def\poc{\widetilde{\oc}}
\def\JacGraph[#1]{G_J(#1)}
\def\ValGraph[#1]{\mathcal{O}v(#1)}
\def\vecc[#1]{\vec{e}_{\CCC_{#1}}}
\newcounter{indice}
\title[The feasible region for consecutive patterns of permutations is a cycle polytope]{The feasible region for consecutive patterns of permutations is a cycle polytope}
\author[ J. Borga and R. Penaguiao]{Jacopo Borga\thanks{\href{mailto:jacopo.borga@math.uzh.ch}{jacopo.borga@math.uzh.ch}. Jacopo Borga was supported by the SNF grant 200021-172536}\addressmark{1}, \and Raul Penaguiao\thanks{\href{mailto:raul.penaguiao@math.uzh.ch}{raul.penaguiao@math.uzh.ch}. Raul Penaguiao was supported by the SNF grant 200020-172515} \addressmark{1}}
\address{\addressmark{1}Department of Mathematics, University of Zurich, Switzerland}
\abstract{
We study proportions of consecutive occurrences of permutations of a given size. Specifically, the feasible limits of such proportions on large permutations form a region, called \emph{feasible region}. We show that this feasible region is a polytope, more precisely the cycle polytope of a specific graph called \emph{overlap graph}. This allows us to compute the dimension, vertices and faces of the polytope.
Finally, we prove that the limits of classical occurrences and consecutive occurrences are independent, in some sense made precise in the extended abstract. As a consequence, the scaling limit of a sequence of permutations induces no constraints on the local limit and \textit{vice versa}.
}
\keywords{permutation patterns, cycle polytopes, overlap graphs.}
\begin{document}

\maketitle

This is a shorter version of the preprint \cite{borga2019feasible} that is currently submitted to a journal.
Many proofs and details omitted here can be found in \cite{borga2019feasible}.

\begin{figure}[h]\centering
	\includegraphics[scale=0.57]{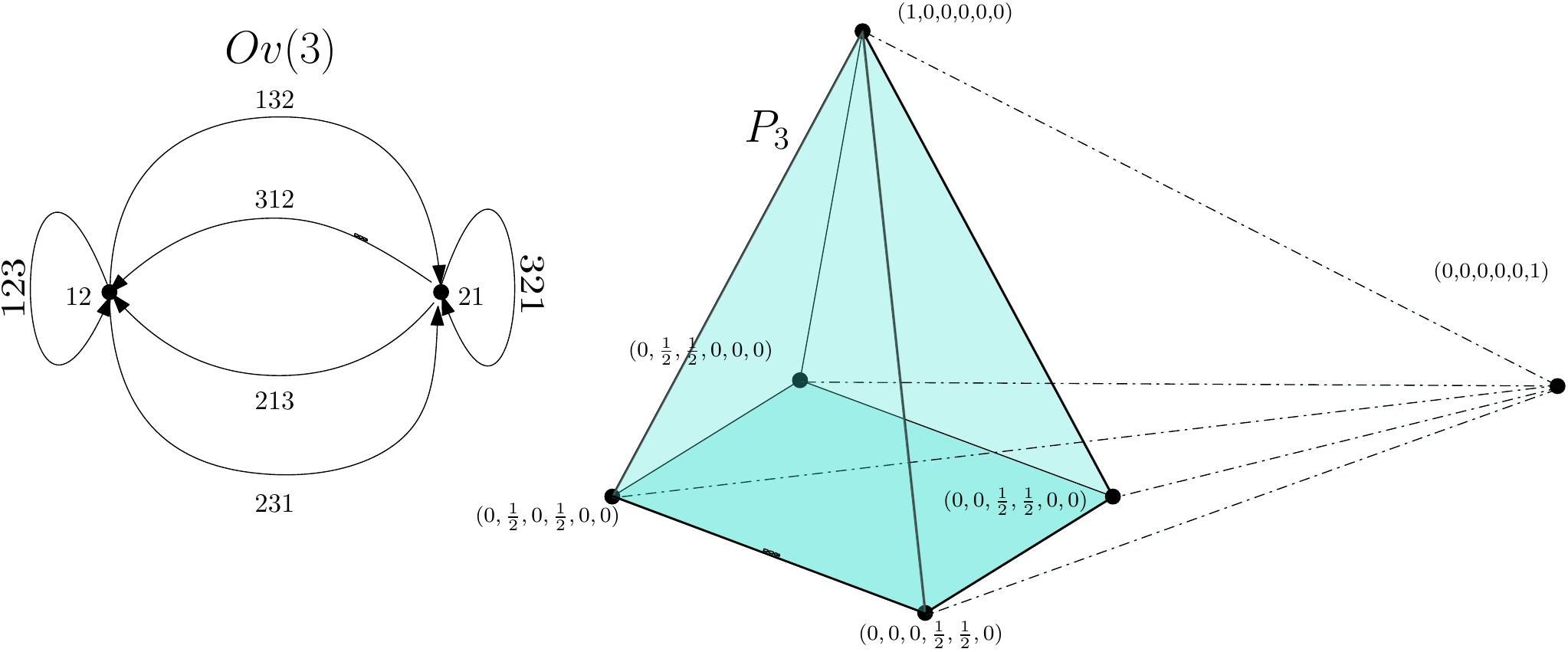}
	\bigskip
	\caption{\textbf{Left:} The overlap graph $\ValGraph[3]$ (see \cref{defn:ov_graph} for a precise definition).\break \textbf{Right:} The four-dimensional polytope $P_3$ given by the six patterns of size three (see \eqref{eq:setofinter} for a precise definition). We highlight in light-blue one of the six three-dimensional faces of $P_3$. This face is a pyramid with square base. The polytope itself is a four-dimensional pyramid, whose base is the highlighted face. From \cref{thm:main_res} we have that $P_3$ is the cycle polytope of $\ValGraph[3]$. \label{fig:P_3}}
\end{figure}

\section{Introduction}
\label{sect:intro}

\subsection{Motivations}

\label{sect:occ and cocc}

Despite not presenting any probabilistic result here, we give some motivations that come from the study of random permutations.
This is a classical topic at the interface of combinatorics and discrete probability theory. There are two main approaches to it: the first concerns the study of statistics on permutations, and the second, more recent, looks for the limits of permutations themselves. The two approaches are not orthogonal and many results relate them, for instance \cref{thm:carac_occ,thm:carac_cocc} below.  

In order to study the limit of permutations, two main notions of convergence have been defined: a global notion of convergence (called permuton convergence) and a local notion of convergence (called Benjamini--Schramm convergence, or BS-convergence, for short).
A permuton is a probability measure on the unit square with uniform marginals.
The notion of permuton limit for permutations has been introduced in \cite{MR2995721}, and represents the scaling limit of a permutation seen as a permutation matrix, as the size grows to infinity. The study of permuton limits is an active and exciting research field in combinatorics, see for instance \cite{bassino2019scaling, MR3813988,kenyon2015permutations} and references therein.
On the other hand, the notion of BS-limit for permutations is more recent, and it has been introduced in \cite{borga2018local}. 
Informally, in order to investigate BS-limits, we study the permutation in a neighborhood around a randomly marked point in its one-line notation. Limiting objects for this framework are called \emph{infinite rooted permutations} and are in bijection with total orders on the set of integer numbers. 
BS-limits have also been studied in some other works, see for instance \cite{bevan2019permutations,borga2019square,borga2019decorated}.

Let $n\in\NN=\Z_{>0}$. We denote by $\mathcal{S}_n$ the set of permutations of size $n$, and by $\SS$ the set of all permutations.
We recall the definition of patterns for permutations. For $\sigma\in \SS_n,$ $\pi \in \SS_k$ and a subsequence $1\leq i_1 < \ldots < i_k\leq n$, we say that $\sigma(i_1) \ldots \sigma(i_k)$ is an \emph{occurrence} of $\pi$ in $\sigma$, if $\sigma(i_1) \ldots \sigma(i_k)$ has the same relative order as $\pi.$ If the indices $i_1, \ldots ,i_k$ form an interval, then we say that $\sigma(i_1) \ldots \sigma(i_k)$ is a \emph{consecutive occurrence} of $\pi$ in $\sigma$.
We denote by $\oc(\pi,\sigma)$ (resp. by $\coc(\pi,\sigma)$) the number of occurrences (resp. consecutive occurrences) of a pattern $\pi$ in $\sigma$.
For example, if $\pi = 132$ and $\sigma = 72638145 $, we have that $\oc(\pi,\sigma) = 7 $ and $\coc(\pi,\sigma) = 1$.
We also denote by $\widetilde{\occ}(\pi,\sigma)$ (resp.\ $\pcoc(\pi,\sigma)$) the proportion of classical occurrences (resp.\ consecutive occurrences) of a permutation $\pi\in\SS_k$ in $\sigma\in\SS_n$, that is 
\begin{equation*}
\label{conpatden}
\poc(\pi,\sigma)\coloneqq\frac{\oc(\pi,\sigma)}{\binom{n}{k}}\in[0,1] , \quad \quad \pcoc(\pi,\sigma)\coloneqq\frac{\coc(\pi,\sigma)}{n}\in[0,1]\, .
\end{equation*}
The following theorems provide relevant combinatorial characterizations of the two aforementioned notions of convergence for permutations.

\begin{theorem}[\cite{MR2995721}]\label{thm:carac_occ}
	For any $n\in\NN$, let $\sigma^n\in\SS$ and assume that $|\sigma^n|\to\infty$. The sequence $(\sigma^n)_{n\in\NN}$ converges to some permuton $P$ if and only if there exists a vector $(\Delta_{\pi}(P))_{\pi\in\mathcal{S}}$ of non-negative real numbers (that depends on $P$) such that, for all $\pi\in\mathcal{S}$,
	$$\widetilde{\occ}(\pi,\sigma^n)\to\Delta_{\pi}(P).$$
\end{theorem}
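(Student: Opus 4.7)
The plan is to use the standard machinery of weak convergence of probability measures on the unit square, combined with a uniqueness argument showing that pattern densities separate permutons.

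First I would set up the natural candidate for the limiting functional: given a permuton $P$ and a pattern $\pi \in \SS_k$, sample $k$ i.i.d.\ points $X_1,\dots,X_k$ from $P$, order them by the first coordinate, and let $\Delta_\pi(P)$ be the probability that the sequence of ranks of their second coordinates has the same relative order as $\pi$. Since $P$ has uniform marginals, with probability one all coordinates are distinct, so the definition makes sense. One should check that $\Delta_\pi(P) \ge 0$ and $\sum_{\pi \in \SS_k} \Delta_\pi(P) = 1$ for every $k$.

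For the forward implication, assume $\sigma^n \to P$ in the permuton topology. Writing $\mu_{\sigma^n}$ for the permuton associated to $\sigma^n$, the density $\poc(\pi,\sigma^n)$ is, up to an $O(1/|\sigma^n|)$ error coming from the small probability of sampling two points in the same ``cell'', equal to the probability that $k$ i.i.d.\ samples from $\mu_{\sigma^n}$ induce pattern $\pi$. The event of inducing $\pi$ corresponds to a union of open axis-aligned rectangles in $([0,1]^2)^k$ whose topological boundary has zero $P^{\otimes k}$-measure (again by the uniform marginal property). Weak convergence $\mu_{\sigma^n} \to P$ implies $\mu_{\sigma^n}^{\otimes k} \to P^{\otimes k}$, and the Portmanteau theorem then gives $\poc(\pi,\sigma^n) \to \Delta_\pi(P)$.

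For the converse, suppose $\poc(\pi,\sigma^n) \to \Delta_\pi$ for every $\pi \in \SS$. The space of permutons is a closed subset of probability measures on $[0,1]^2$ (a compact space in the weak topology), hence is itself compact. Extract any convergent subsequence $\sigma^{n_j} \to Q$; by the forward direction, $\Delta_\pi = \Delta_\pi(Q)$ for every $\pi$. It then suffices to prove that a permuton is uniquely determined by the family $(\Delta_\pi(Q))_{\pi \in \SS}$, since then every subsequential limit coincides with the unique permuton $P$ satisfying $\Delta_\pi(P) = \Delta_\pi$, and the whole sequence converges.

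The main obstacle is this uniqueness claim, which is where a Stone--Weierstrass type argument enters. Each $\Delta_\pi(Q)$ is an integral of an indicator of a union of axis-aligned rectangular boxes in $([0,1]^2)^k$ against $Q^{\otimes k}$. The linear span of such indicators, together with constants, forms an algebra that separates points in $([0,1]^2)^k$ and is dense in $C(([0,1]^2)^k)$; thus all integrals $\int f\, dQ^{\otimes k}$ for continuous $f$ are determined, which in turn determines $Q^{\otimes k}$ and hence $Q$. Carrying out this density argument cleanly, and handling the negligible diagonal contribution in the forward direction, are the only real technicalities; everything else follows from standard weak convergence and compactness.
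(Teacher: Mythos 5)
This theorem is not proved in the paper: it is quoted from the reference \cite{MR2995721} (Hoppen--Kohayakawa--Moreira--R\'ath--Sampaio), so your proposal can only be compared with the standard proof given there. Your overall architecture matches that standard proof: the forward direction via the sampling interpretation of $\widetilde{\occ}(\pi,\sigma^n)$ up to an $O(k^2/|\sigma^n|)$ diagonal correction, weak convergence of product measures, and Portmanteau applied to the pattern events (whose boundaries are $P^{\otimes k}$-null because the uniform marginals make coordinate coincidences have probability zero); and the converse via compactness of the space of permutons plus a uniqueness statement. All of that is correct.

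The gap is in the uniqueness step. Knowing $(\Delta_\pi(Q))_{\pi\in\SS}$ gives you the $Q^{\otimes k}$-measure of only the $k!$ specific pattern events at each level $k$, not of arbitrary axis-aligned boxes. The algebra generated by the indicators of these events (and their products across levels) consists of functions that depend only on the \emph{relative order} of the coordinates of the $k$ sampled points; this algebra does not separate points of $([0,1]^2)^k$ --- two configurations with the same relative order but different actual coordinates are indistinguishable --- so no Stone--Weierstrass or monotone-class argument at a fixed level $k$ can recover $Q^{\otimes k}$, and the inference ``all integrals $\int f\,dQ^{\otimes k}$ are determined'' does not follow. The correct route, and the one taken in \cite{MR2995721}, is a law-of-large-numbers reconstruction: if $\bm{\pi}_k$ is the pattern of $k$ i.i.d.\ samples from $Q$, then the permuton associated to $\bm{\pi}_k$ converges to $Q$ as $k\to\infty$. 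This uses the uniform marginals in an essential way (the empirical ranks of the sampled points concentrate around their actual coordinates by Glivenko--Cantelli), and it is exactly the mechanism by which order-only information at level $k\to\infty$ recovers metric information about $Q$. Since the law of $\bm{\pi}_k$ is determined by $(\Delta_\pi(Q))_{\pi\in\SS_k}$, this yields the uniqueness you need; with that lemma substituted for your density argument, the rest of your proof goes through.
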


\begin{theorem}[\cite{borga2018local}]\label{thm:carac_cocc}
	For any $n\in\NN$, let $\sigma^n\in\SS$ and assume that $|\sigma^n|\to\infty$. The sequence $(\sigma^n)_{n\in\NN}$ converges in the Benjamini--Schramm topology to some random infinite rooted permutation $\sigma^\infty$ if and only if there exists a vector $(\Gamma_{\pi}(\sigma^\infty))_{\pi\in\mathcal{S}}$ of non-negative real numbers (that depends on $\sigma^\infty$) such that, for all $\pi\in\mathcal{S}$,
	$$\widetilde{\coc}(\pi,\sigma^n)\to\Gamma_{\pi}(\sigma^\infty).$$
\end{theorem}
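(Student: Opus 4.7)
The plan is to prove the equivalence by identifying, up to negligible boundary terms, the consecutive-pattern proportions $\pcoc(\pi, \sigma^n)$ with the local-neighborhood observables that define Benjamini--Schramm convergence. Both implications then follow, with the backward direction requiring a compactness-plus-uniqueness argument.

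First, I would set up the dictionary. For $\pi \in \SS_k$, the integer $\coc(\pi, \sigma)$ counts the positions $j \in \{1, \ldots, |\sigma|-k+1\}$ such that the window $\sigma(j)\sigma(j+1)\cdots\sigma(j+k-1)$ has pattern $\pi$. Dividing by $|\sigma|$ and discarding a boundary set of size at most $k$, $\pcoc(\pi, \sigma)$ is the probability that a uniformly chosen position $j \in \{1,\ldots,|\sigma|\}$ lies in the interior and initiates a size-$k$ window of pattern $\pi$. Re-centering $j \mapsto j + \lfloor (k-1)/2 \rfloor$, this becomes the probability that a uniformly chosen root sees pattern $\pi$ in a prescribed window of total size $k$ around it. This is precisely the pattern of the root's local neighborhood of shape $(\lfloor (k-1)/2\rfloor, \lceil (k-1)/2\rceil)$, which is the basic observable in the BS-topology.

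For the forward direction, BS-convergence $\sigma^n \to \sigma^\infty$ means by definition that for every finite-sized local neighborhood, the distribution of its pattern around a uniform root converges to the law of the corresponding pattern around the root of $\sigma^\infty$. Through the dictionary, this yields $\pcoc(\pi, \sigma^n) \to \Gamma_\pi(\sigma^\infty)$ for every $\pi$, where $\Gamma_\pi(\sigma^\infty)$ is defined as the probability that the appropriate window around the root of $\sigma^\infty$ has pattern $\pi$.

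For the converse, assume $\pcoc(\pi, \sigma^n) \to \Gamma_\pi$ for every $\pi$. Since $h$-neighborhoods of a random rooted permutation take only finitely many values for each fixed $h$, the space of random infinite rooted permutations is sequentially compact in the BS-topology by a diagonal extraction. Every subsequence of $(\sigma^n)$ therefore admits a further subsequence BS-converging to some random infinite rooted permutation $\tau$, which by the forward direction must satisfy $\Gamma_\pi(\tau) = \Gamma_\pi$ for all $\pi$. The main obstacle, and final step, is to prove that such a $\tau$ is unique in distribution: by the dictionary, the family $\{\Gamma_\pi(\tau)\}_{\pi \in \SS_k}$ determines the law of the appropriate $k$-window around the root of $\tau$, and these window laws, as $k$ varies, comprise the full set of finite-dimensional marginals of $\tau$ viewed as a random total order on $\Z$. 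A Kolmogorov-type extension argument on the Polish space of total orders on $\Z$ then promotes these consistent marginals to a unique law on $\tau$, completing the uniqueness and hence the theorem.
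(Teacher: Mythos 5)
The paper does not actually prove \cref{thm:carac_cocc} here — it is imported from \cite{borga2018local} — but your argument follows essentially the same route as the cited proof: the $O(k/n)$ dictionary between $\pcoc(\pi,\cdot)$ and the law of the re-centered $k$-window at a uniform root gives the forward implication directly from the definition of the local topology, and the converse follows from compactness of the (metrizable, totally disconnected) space of rooted permutations together with the fact that the consistent window laws determine the law of the limiting random total order on $\Z$. The sketch is correct; the only point worth making explicit is that the uniform root makes all window offsets equivalent up to $O(k/n)$, which is what lets the single family $(\Gamma_\pi)_{\pi\in\SS_{2h+1}}$ pin down the law of the \emph{centered} $h$-neighborhood of any subsequential limit.
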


A natural question, motivated by the theorems above, is the following: given a finite family of patterns $\mathcal{A}\subseteq\SS$ and a vector $(\Delta_\pi)_{\pi\in\mathcal{A}}\in [0,1]^{\mathcal A}$, or $(\Gamma_\pi)_{\pi\in\mathcal{A}}\in [0,1]^{\mathcal A}$, does there exist a sequence of permutations $(\sigma^n)_{n\in\NN}$ such that $|\sigma^n|\to\infty$ and
$$\widetilde{\occ}(\pi,\sigma^n)\to\Delta_{\pi}, \quad \text{for all} \quad \pi\in\mathcal{A},$$
or
$$\widetilde{\coc}(\pi,\sigma^n)\to\Gamma_{\pi}, \quad \text{for all} \quad \pi\in\mathcal{A}\;?$$

We consider the classical pattern limiting sets, sometimes called the \textit{feasible region} for (classical) patterns, defined as
\begin{align}
\label{eq:setofnotinter}
clP_k \coloneqq&\left\{\vec{v}\in [0,1]^{\SS_k} \big| \exists (\sigma^m)_{m\in\NN} \in \SS^{\NN} \text{ s.t. }|\sigma^m| \to \infty\text{ and } \poc(\pi, \sigma^m ) \to \vec{v}_{\pi},\forall \pi\in\SS_k  \right\}\\
=&\left\{(\Delta_{\pi}(P))_{\pi\in\SS_k} \big| P\text{ is a permuton}  \right\} \, ,\nonumber
\end{align}
and we introduce the consecutive pattern limiting sets, called here the \textit{feasible region} for consecutive patterns,
\begin{align}
\label{eq:setofinter}
P_k \coloneqq &\left\{\vec{v}\in [0,1]^{\SS_k} \big| \exists (\sigma^m)_{m\in\NN} \in \SS^{\NN} \text{ s.t. }|\sigma^m| \to
\infty \text{ and }  \pcoc(\pi, \sigma^m ) \to \vec{v}_{\pi}, \forall \pi\in\SS_k \right\}\\
=&\left\{(\Gamma_{\pi}(\sigma^{\infty}))_{\pi\in\SS_k} \big| \sigma^{\infty}\text{ is a random infinite rooted \emph{shift-invariant} permutation}  \right\} \, . \nonumber
\end{align}

For the precise definition of \emph{shift-invariant} permutations see \cite[Defintion 2.41]{borga2018local}. The equalities in \eqref{eq:setofnotinter} and \eqref{eq:setofinter} follow from \cite[Theorem 1.6]{MR2995721} and \cite[Proposition 3.4]{borga2019feasible} respectively.

The feasible region $clP_k$ was previously studied in several papers (see \cref{sect:feas_reg}). 
The main goal of this project is to analyze the feasible region $P_k$, that turns out to be related to specific graphs called \emph{overlap graphs} (see \cref{sect:ov_gr}) and its corresponding cycle polytope (see \cref{sect:cyc_pol}).
In particular, the feasible region $P_k$ is convex.
This is not in general true for the region $clP_k$.

\subsection{Definitions and informal statement of the main results}

To introduce our main results we need two key definitions.

\begin{definition}
	\label{defn:ov_graph}
	The \emph{overlap graph} $\ValGraph[k]$ is a directed multigraph with labeled edges, where the vertices are elements of $\SS_{k-1}$ and for every $\pi\in\SS_{k}$ there is an edge labeled by $\pi$ from the pattern induced by the first $k-1$ indices of $\pi$ to the pattern induced by the last $k-1$ indices of $\pi$.
\end{definition}

The overlap graphs  $\ValGraph[3]$ and $\ValGraph[4]$ are displayed in \cref{fig:P_3} and \cref{Overlap_graph_exemp} respectively.
\begin{figure}
	\begin{minipage}[c]{0.49\textwidth}
		\centering
		\includegraphics[width=0.8\textwidth]{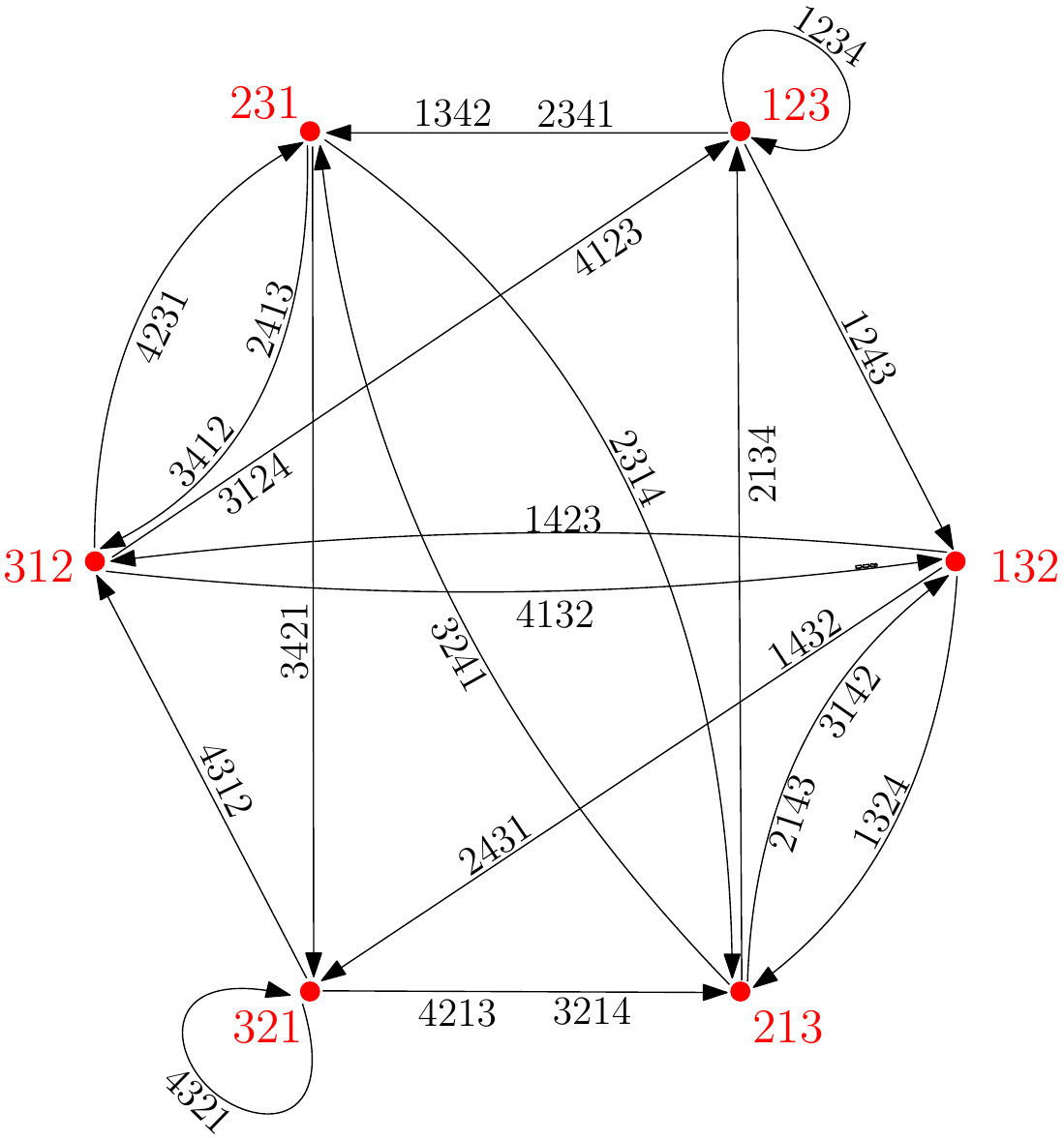}
	\end{minipage}
	\begin{minipage}[c]{0.49\textwidth}
		\caption{
			The overlap graph $\ValGraph[4]$. The six vertices are painted in red and the edges are drawn as labeled arrows. Note that in order to obtain a clearer picture we did not draw multiple edges, but we use multiple labels (for example the edge $231 \to 312$ is labeled with the permutations $3412$ and $2413$ and should be thought of as two distinct edges labeled with $3412$ and $2413$ respectively). \label{Overlap_graph_exemp}
		}
	\end{minipage}
\end{figure}

We denote the convex hull of a family $\mathcal F$ of points by $\conv \mathcal F$.
A non-empty cycle $\mathcal C$ in a directed multigraph is said to be a \textit{simple cycle} if it does not repeat vertices.

\begin{definition}
	\label{def:cyclepoly}
	Let $G=(V,E)$ be a directed multigraph.
	For each non-empty cycle $\mathcal{C}$ in $G$, define $\vec{e}_{\mathcal{C}}\in \mathbb{R}^{E}$ so that
	\begin{equation}\label{eq:cycvec}
	(\vec{e}_{\mathcal{C}})_e \coloneqq \frac{\text{number of occurrences of $e$ in $\sc$}}{|\mathcal{C}|}, \quad  \text{for all} \quad e\in E. 
	\end{equation}
	We define the \emph{cycle polytope} of $G$ to be $P(G) \coloneqq \conv \{\vec{e}_{\mathcal{C}} | \, \mathcal{C} \text{ is a simple cycle of } G \}$.
\end{definition}

The example of the cycle polytope $P(\ValGraph[3])$ is given in \cref{fig:P_3}.

\medskip

Our first result is a full description of the feasible region $P_k$ as the cycle polytope of the overlap graph $\ValGraph[k]$ (see \cref{thm:main_res}).

The second result deals with the dimension and the equations that define the cycle polytope of a general directed multigraph (see \cref{thm:dimfullgr}) and a description of the associated face poset (see \cref{cor:facestruct}).
From this theorem, the dimension and the equations of $P_k$ can be directly computed.

Our last result addresses a slightly different question. We intertwine the notions of classical patterns and consecutive patterns, and discuss the shape of the resulting feasible region. 
In \cref{thm:mixing}, we reduce the description of this new feasible region to the independent descriptions of the feasible regions for pattens and consecutive patterns discussed in \cref{sect:occ and cocc}.

\subsection{State-of-the-art and a conjecture}

\subsubsection{The feasible region for classical patterns}
\label{sect:feas_reg}
The feasible region $clP_k$ was first studied in \cite{kenyon2015permutations} for some particular families of patterns instead of the whole $\SS_k$. The authors specifically studied patterns of small size (2 or 3) showing that, already in these cases, the problem of describing the associated feasible region presents a lot of difficulties. They also showed that this region is not always convex (see for instance \cite[Section 9]{kenyon2015permutations}).

\medskip

The set $clP_k$ was also studied in \cite{MR3567538}, even though with a different objective.
There, it was shown that $clP_k$ contains an open ball $B$ with dimension $|I_k|$, where $I_k$ is the set of permutations of size at most $k$ that are $\oplus$-indecomposable.
Specifically, for a particular ball $B\subseteq \R^{I_k}$, the authors constructed permutons $P_{\vec{x}}$ such that $\Delta_{\pi }(P_{\vec{x}}) = \vec{x}_{\pi}$, for each point $\vec{x} \in B$ and each $\pi\in I_k$.

The work done in \cite{MR3567538} opened the problem of finding the maximal dimension of an open ball contained in $clP_k$, and placed a lower bound on it.
In \cite{vargas2014hopf} an upper bound for this maximal dimension is indirectly given as the number of so-called \textit{Lyndon permutations} of size at most $k$, whose set we denote $\mathcal{L}_k$.
The author shows that for any permutation $\pi$ that is not a Lyndon permutation, $\poc(\pi, \sigma ) $ can be expressed as a polynomial on the functions $\{\poc(\tau, \sigma ) |\tau \in \mathcal{L}_k \}$ that does not depend on $\sigma$.
It follows that $clP_k$ sits inside an algebraic variety of dimension $|\mathcal{L}_k|$.
We expect that this bound is sharp since this is the case for small values of $k$.

\begin{conjecture}
The feasible region $clP_k$ is full-dimensional inside a manifold of dimension $|\mathcal{L}_k|$.
\end{conjecture}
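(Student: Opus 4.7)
The plan is to establish the conjecture by combining the algebraic upper bound of Vargas with a geometric lower bound construction that refines the one in \cite{MR3567538}. The upper bound is essentially already in place: Vargas shows that for every non-Lyndon $\pi$ of size at most $k$, the density $\poc(\pi, \sigma)$ is a universal polynomial in $\{\poc(\tau, \sigma) : \tau \in \mathcal{L}_k\}$. Writing $F: clP_k \to [0,1]^{\mathcal{L}_k}$ for the projection onto Lyndon coordinates, these polynomial identities cut out an algebraic variety $V \subseteq \R^{\SS_k}$ of dimension at most $|\mathcal{L}_k|$ containing $clP_k$, and $F$ is a regular map with image inside the compact semialgebraic set $F(clP_k)$. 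What remains is the lower bound: exhibit a family of permutons whose Lyndon density vector covers an open set of $\R^{\mathcal{L}_k}$.

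The central step I would carry out is a parametric permuton construction indexed by $\mathcal{L}_k$. Concretely, for each $\tau \in \mathcal{L}_k$ pick a permuton $P_\tau$ which is a small perturbation of the uniform permuton concentrated on a pattern dense in $\tau$ (for instance, the natural permuton of a long random permutation with prescribed statistics biased toward $\tau$), and consider the convex combinations $P_{\vec{x}} \coloneqq (1 - \sum_\tau x_\tau) P_0 + \sum_{\tau \in \mathcal{L}_k} x_\tau P_\tau$ for $\vec{x}$ in a small open ball $B \subseteq \R^{\mathcal{L}_k}_{\geq 0}$ around the origin. Since $P \mapsto (\Delta_\pi(P))_{\pi \in \SS_k}$ is multilinear in the tensor powers of $P$, the composition $\vec{x} \mapsto F(P_{\vec{x}})$ is a smooth map $B \to \R^{\mathcal{L}_k}$, and proving its differential at $\vec{x} = 0$ has full rank $|\mathcal{L}_k|$ immediately yields an open set of $F(clP_k)$, hence full dimension of $clP_k$ inside $V$ by the inverse function theorem.

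The Jacobian calculation reduces to the following algebraic statement: the differentials $\{dF_\tau\}_{\tau \in \mathcal{L}_k}$ are linearly independent as linear functionals on the space of signed permutons. Here I would invoke (and adapt) Vargas's Hopf-algebraic result that Lyndon permutations form a free generating set of the shuffle algebra on $\SS$: algebraic independence of the $\poc(\tau,\cdot)$ as functions on permutations forces linear independence of their first-order variations along a sufficiently rich family of perturbations. This is the classical bridge between freeness in a Hopf algebra and generic rank of character maps, and the analogue in the permuton setting should follow by approximating permutons with empirical distributions of large permutations.

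The hard part will be the Jacobian non-vanishing. Unlike in \cite{MR3567538}, where the $\oplus$-indecomposable structure allows an explicit block-construction of permutons realizing prescribed densities, Lyndon permutations mix indecomposable and decomposable permutations, and the linearization of $\Delta_\pi$ under convex deformations entangles densities across all sizes up to $k$. Making the Jacobian argument rigorous will require either (a) a clever choice of the anchor permutons $P_\tau$ so that the resulting derivative matrix is triangular with respect to a suitable ordering of $\mathcal{L}_k$ (e.g.\ by size, then by lexicographic order, mirroring the triangularity used by Vargas), or (b) an abstract argument showing that any linear dependency among the $dF_\tau$ would contradict the freeness of the shuffle algebra on Lyndon permutations. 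Route (a) seems more constructive and is where I would concentrate the effort.
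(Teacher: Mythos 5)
You are attempting to prove a statement that the paper itself does not prove: this is stated as an open \emph{conjecture}, supported only by the remark that the bound $|\mathcal{L}_k|$ is known to be sharp for small values of $k$. So there is no proof in the paper to compare against, and the relevant question is whether your proposal actually closes the problem. It does not. The upper-bound half is fine and is exactly what the paper already records: Vargas's polynomial identities exhibit $clP_k$ inside the graph of a polynomial map over the Lyndon coordinates, which is a smooth manifold of dimension $|\mathcal{L}_k|$. The lower bound is where the content of the conjecture lives, and your argument there is a plan rather than a proof: you do not specify the anchor permutons $P_\tau$ beyond ``a small perturbation of the uniform permuton biased toward $\tau$,'' and you explicitly defer the full-rank Jacobian computation, which is precisely the step that has resisted proof (it is why \cite{MR3567538} only achieves dimension $|I_k|$, the $\oplus$-indecomposable count, rather than $|\mathcal{L}_k|$).

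The specific step that would fail as written is the bridge from Hopf-algebraic freeness to linear independence of the differentials $dF_\tau$ at the uniform permuton. Algebraic independence of the functionals $\poc(\tau,\cdot)$ over the (infinite-dimensional) space of permutons guarantees at best that the Jacobian of $F$ has full rank along some sufficiently generic finite-dimensional slice through some point; it does not tell you that your particular one-parameter-per-Lyndon-permutation family of convex combinations around the uniform permuton is such a slice, nor that the base point $\vec{x}=0$ is a point of maximal rank. Convex perturbations of the form $(1-\sum_\tau x_\tau)P_0 + \sum_\tau x_\tau P_\tau$ only probe the derivative of $\Delta_{\tau'}$ in the directions $P_\tau - P_0$, and degeneracies among these finitely many directions are entirely possible without contradicting freeness. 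To make this rigorous you would need either the explicit triangularity you describe in route (a) --- which requires actually constructing the $P_\tau$ and computing the linearized densities, the genuinely hard combinatorial work --- or a compactness/genericity argument showing that full rank is achieved somewhere in $clP_k$ and that the corresponding point is realized by your family. Until one of these is carried out, the conjecture remains open.
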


\subsubsection{Overlap graphs}
\label{sect:ov_gr}
Overlap graphs were already studied in previous works. We give here a brief summary of the relevant literature. The overlap graph $\ValGraph[k]$ is the line graph of the \emph{de Bruijn graph for permutations} of size $k-1$. The latter was introduced in \cite{MR1197444} and further studied in \cite{MR3944621}, where the authors studied universal cycles (sometime also called \emph{de Bruijn cycles}) of several combinatorial structures, including permutations. 
In this case, a universal cycle of order $n$ is a cyclic word of size $n!$ on an ordered alphabet of $N$ letters that contains all the patterns of size $n$ as consecutive patterns. In \cite{MR1197444} it was conjectured (and then proved in \cite{MR2548540}) that such universal cycles always exist when the alphabet is of size $N=n+1$. 
In \cite{asplund2018enumerating} the authors enumerate some families of simple cycles of these graphs.
This starts the undertaking of describing all simple cycles of a explicit de Bruijn graph.
As a consequence, this also gives a description of the vertices of the corresponding cycle polytope.

The existence of Eulerian and Hamiltonian cycles in classical de Bruijn graphs is shown in \cite{MR1197444}.
We remark that applying the same ideas, we can prove the existence of both Eulerian and Hamiltonian cycles in $\ValGraph[k]$.
We further remark that with an Eulerian path in $\ValGraph[k]$, we can construct a permutation $\sigma $ of size $k!+k -1$ such that $\coc (\pi,  \sigma ) = 1 $ for any $\pi\in \SS_k$.
This is a so-called ``superpermutation'' of minimal size for consecutive patterns.

\subsubsection{Polytopes and cycle polytopes}
\label{sect:cyc_pol}

Polytopes associated to graphs have been objects of research for a long time in computer science and graph theory.
A striking example is the \textit{flow polytope}, introduced in \cite{baldoni2008kostant}.
This is a polytope that is associated to a root system and a \textit{flow vector}.
If we specifically take a root system of type $A_n$, this can be described by an undirected graph on $[n]\coloneqq\{1,\dots,n\}$: in this way, if we are given a graph $G=([n], E)$ and a flow vector $\vec{a}\in \R^{n}$, its corresponding flow polytope is in fact defined as
$$\mathcal{F}_G(\vec{a} ) \coloneqq \left\{\vec{x}\in \R^{E} \Bigg| \sum_{ \{j <i\} \in E}\vec{x}_{\{j <i\}} - \sum_{ \{i <j\}  \in E} \vec{x}_{\{i <j\}} = \vec{a}_i, \, i \in [n] \right\}\, . $$

Classical examples of polytopes that are flow polytopes are the \textit{Stanley--Pitman polytope}, also called the \textit{parking function polytope}, and the 
\textit{Chan--Robbins--Yuen polytope}, a polytope on the space of doubly stochastic square matrices.
In \cite{baldoni2008kostant}, the authors obtain formulas for the volume and the number of integer points in its interior.
In particular, they recover a formula for the volume of the Chan--Robbins--Yuen polytope, due to Zeilberger, in his very short paper \cite{zeilberger1999proof}.

In \cite{balas2000cycle} and in \cite{MR986897}, \textit{unrescaled cycle polytopes} (\textit{U-cycle polytopes} for short)\footnote{The U-cycle polytopes and the cycle polytopes introduced in \cref{def:cyclepoly} are intrinsically related to one another, as the vertices of the U-cycle polytope of a directed multigraph $G$ are defined as the incidence vectors of simple cycles of $G$. In \cref{def:cyclepoly}, we \emph{additionally rescale} each of the vertices so that the coordinates sum up to one. The U-cycle polytopes were considered in the literature simply under the name of \textit{cycle polytopes}. However, here, we adopt the name of \emph{cycle polytopes} to our family of polytopes for the sake of simplifying the terminology in this extended abstract.} were introduced.
The U-cycle polytopes for undirected graphs were considered to tackle the Simple Cycle Problem \cite{MR1944484}, that also goes by the name of Weighted Girth Problem \cite{bauer1997circuit}. 
Balas \& Oosten \cite{balas2000cycle} and Balas \& Stephan \cite{balas2009cycle} compute the dimension of the U-cycle polytope of the \emph{complete graph} (that is, the complete directed graph without loops) and describe the faces of co-dimension one of the corresponding polytope.
In comparison, we study and give a dimension theorem for cycle polytopes (instead of U-cycle polytopes) for general directed multigraphs (not restricting to the case of complete graphs). Our results extend some results of Gleiss, Leydold and Stadler \cite{MR2070155}.

\section{First main result: Description of \texorpdfstring{$P_k$}{the feasible region}}
Our first main result is the following.

\begin{theorem}
	\label{thm:main_res}
	$P_k$ is the cycle polytope of the overlap graph $\ValGraph[k]$. Its dimension is $k! - (k-1)!$ and its vertices are given by the simple cycles of $\ValGraph[k]$.
\end{theorem}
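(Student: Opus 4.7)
The central observation is a dictionary between consecutive patterns of a permutation and walks in the overlap graph: a permutation $\sigma \in \SS_n$ corresponds to a walk of length $n-k+1$ in $\ValGraph[k]$ obtained by reading off, at each position $1 \le i \le n-k+1$, the pattern of $\sigma(i)\,\sigma(i{+}1)\cdots\sigma(i{+}k{-}1)$, which is an edge from the pattern on the first $k-1$ of these entries to the pattern on the last $k-1$. Under this correspondence, the vector $(\coc(\pi,\sigma))_{\pi\in\SS_k}$ is precisely the edge-multiplicity vector of the walk, and therefore $(\pcoc(\pi,\sigma))_{\pi\in\SS_k}$ is this vector divided by $n$, which differs from dividing by the walk length $n-k+1$ by $O(1/n)$.

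\textbf{Step 1: $P_k \subseteq P(\ValGraph[k])$.} Given a convergent sequence $\sigma^m$ with $|\sigma^m|\to\infty$, consider the walk associated to $\sigma^m$ and its edge-count vector $f^m$. At every internal vertex $w$ of $\ValGraph[k]$, in-flow equals out-flow; at the start and end vertex of the walk the imbalance is $\pm 1$. Hence $f^m$ is a unit flow from one vertex to another, and by standard flow decomposition it can be written as $\sum_i a_i \chi_{\sc_i} + \chi_{P}$ where the $\sc_i$ are simple cycles, $P$ is a simple path of length at most $(k-1)!$, and $a_i \ge 0$. Dividing by $|\sigma^m|$ and using $|\chi_P| = O(1)$, we get $\pcoc(\cdot,\sigma^m) = \sum_i \lambda_i^m \vec{e}_{\sc_i} + o(1)$ with non-negative weights $\lambda_i^m = a_i |\sc_i|/|\sigma^m|$ summing to $1-o(1)$. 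Passing to the limit places the limit point in $\conv\{\vec{e}_{\sc}\} = P(\ValGraph[k])$.

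\textbf{Step 2: $P(\ValGraph[k]) \subseteq P_k$.} The converse requires realizing walks in $\ValGraph[k]$ as permutations. Starting from an edge sequence $\pi_1,\pi_2,\ldots,\pi_N$ forming a walk (so consecutive edges overlap compatibly), one constructs iteratively a sequence $a_1,\ldots,a_{N+k-1}$ of distinct reals whose $i$-th window of size $k$ has pattern $\pi_i$: the first $k$ entries are chosen to realize $\pi_1$, and at each subsequent step the new entry $a_{i+k-1}$ is inserted in the unique slot (relative to $a_{i+1},\ldots,a_{i+k-2}$) dictated by $\pi_i$; standardization yields the desired permutation. Given a simple cycle $\sc$, traversing it $m$ times produces a permutation realizing $\vec{e}_{\sc}$ in the limit $m\to\infty$. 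For a rational convex combination $\sum_i \lambda_i \vec{e}_{\sc_i}$, one traverses each $\sc_i$ a suitable number of times proportional to $\lambda_i$, joining successive cycles by short connecting paths (available because $\ValGraph[k]$ is strongly connected, which one checks directly or via the Eulerian/Hamiltonian cycle results cited in \cref{sect:ov_gr}); the connecting paths have bounded total length and contribute a vanishing error. Arbitrary points of $P(\ValGraph[k])$ are then reached by rational approximation and a diagonal argument.

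\textbf{Step 3: Dimension and vertices.} Every $\vec{e}_{\sc}$ satisfies flow conservation at each vertex of $\ValGraph[k]$ and has coordinates summing to $1$. The $(k-1)!$ flow-conservation equations have a single linear dependency (their sum is zero), so together with the normalization they cut out an affine subspace of codimension $(k-1)!$ in $\R^{\SS_k}$; the dimension of $P(\ValGraph[k])$ is therefore at most $k!-(k-1)!$. Equality and the identification of the vertices as exactly the simple-cycle vectors will be deduced from the general dimension and vertex description for cycle polytopes in \cref{thm:dimfullgr}, applied to $\ValGraph[k]$, once we verify its hypotheses (strong connectedness and the right combinatorial non-degeneracy), which are immediate for the overlap graph.

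\textbf{Main obstacle.} The delicate point is Step 2: showing that \emph{every} walk in $\ValGraph[k]$ is realized by a permutation. The iterative insertion argument works locally, but one must check that it never gets stuck and that the resulting sequence has distinct entries; the paper \cite{borga2019feasible} handles this carefully, and I would follow that template. The flow-decomposition in Step 1 is standard once the walk $\leftrightarrow$ permutation dictionary is set up, and Step 3 is a direct application of the second main result of the paper.
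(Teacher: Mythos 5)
Your proposal is correct and follows the same overall architecture as the paper: the same permutation--walk dictionary for $\ValGraph[k]$, the same cycle-plus-small-tail decomposition for the inclusion $P_k \subseteq P(\ValGraph[k])$, and the same appeal to the general results on cycle polytopes for the dimension and vertex statements. The one place where you genuinely diverge is the reverse inclusion $P(\ValGraph[k]) \subseteq P_k$. The paper first proves that $P_k$ is convex as a standalone fact (\cref{prop:P_n_is_covex}), by a purely permutation-level construction --- direct sums of $s_m$ copies of $\sigma^m_1$ with $t_m$ copies of $\sigma^m_2$ --- which reduces the reverse inclusion to checking that each single simple-cycle vector $\vec{e}_{\mathcal C}$ lies in $P_k$, realized by concatenating $m$ copies of the cycle. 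You instead realize an arbitrary rational convex combination directly at the walk level, concatenating the cycles with short connecting paths, which forces you to invoke strong connectivity of $\ValGraph[k]$ (true, e.g.\ via the de Bruijn graph structure recalled in \cref{sect:ov_gr}, and needed anyway for your Step 3). Both routes work; the paper's buys a reusable and independently interesting statement (convexity of $P_k$, notable because $clP_k$ is \emph{not} convex in general), while yours keeps the whole argument inside the graph and avoids the direct-sum bookkeeping at the cost of the connecting-path surgery and a final rational-approximation/diagonal step. Two small points to tighten: to realize $\sum_i \lambda_i \vec{e}_{\mathcal C_i}$ you must traverse $\mathcal C_i$ roughly $N\lambda_i/|\mathcal C_i|$ times, since the vectors $\vec{e}_{\mathcal C_i}$ are normalized by cycle length, so ``a number of times proportional to $\lambda_i$'' needs that correction; and the identification of the vertices as exactly the simple-cycle vectors comes from the face-poset description (\cref{cor:facestruct}: vertices correspond to minimal full subgraphs, which are the simple cycles), not from \cref{thm:dimfullgr}, which only yields the dimension.
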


In addition, we can also determine the equations that describe the polytope $P_k$ (for that see \cite[Theorem 3.12]{borga2019feasible}).

\medskip

To establish that $P_k=P(\ValGraph[k])$, the first step is the following result.

\begin{proposition}
	\label{prop:P_n_is_covex}
	The feasible region $P_k$ is convex.
\end{proposition}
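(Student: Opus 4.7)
To show convexity, my plan is to exhibit, given $\vec{v}^1, \vec{v}^2 \in P_k$ and $\lambda \in [0,1]$, a sequence of permutations whose consecutive pattern proportions converge to $\lambda \vec{v}^1 + (1-\lambda)\vec{v}^2$. The endpoints $\lambda \in \{0,1\}$ are trivial, so the main case is $\lambda \in (0,1)$; set $r \coloneqq (1-\lambda)/\lambda$, and let $(\sigma^M)$, $(\tau^M)$ be realizing sequences for $\vec{v}^1, \vec{v}^2$ of sizes $n_M, N_M \to \infty$.

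The central tool is the direct sum $\oplus$ of permutations, which is almost additive with respect to $\coc$. For any permutations $\alpha, \beta$ and any $\pi \in \SS_k$, every length-$k$ window of $\alpha \oplus \beta$ that is entirely contained in one summand induces the same pattern there as in $\alpha \oplus \beta$ (since all letters of $\alpha$ precede, in both position and value, all letters of $\beta$), and only $k-1$ windows straddle the boundary. Hence
\[
|\coc(\pi, \alpha \oplus \beta) - \coc(\pi,\alpha) - \coc(\pi,\beta)| \le k - 1,
\]
and iterating gives $|\coc(\pi, \alpha^{\oplus t}) - t\coc(\pi,\alpha)| \le (t-1)(k-1)$ for any $t\ge 1$. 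I will then consider the asymmetric construction
\[
\rho^M \coloneqq (\sigma^M)^{\oplus N_M} \oplus (\tau^M)^{\oplus b_M}, \qquad b_M \coloneqq \lceil r n_M \rceil,
\]
of size $|\rho^M| = N_M n_M + b_M N_M$. Combining the per-block estimate with the single one at the $\sigma$--$\tau$ interface yields
\[
\coc(\pi, \rho^M) = N_M \coc(\pi,\sigma^M) + b_M \coc(\pi,\tau^M) + O\bigl(k(N_M + b_M)\bigr),
\]
uniformly in $\pi$. Dividing by $|\rho^M|$ and using $b_M/n_M \to r$ together with $n_M, N_M \to \infty$ (so the error term is $o(1)$) gives
\[
\pcoc(\pi,\rho^M) \longrightarrow \frac{1}{1+r}\vec{v}^1_\pi + \frac{r}{1+r}\vec{v}^2_\pi = \lambda \vec{v}^1_\pi + (1-\lambda)\vec{v}^2_\pi
\]
for every $\pi \in \SS_k$, proving that $\lambda\vec{v}^1 + (1-\lambda)\vec{v}^2 \in P_k$.

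There is no substantial obstacle: the argument rests entirely on the boundary-additivity of $\coc$ under $\oplus$. The one mild technicality is calibrating the block multiplicities so that the total weight of the $\sigma$-part tends to $\lambda$. The asymmetric choice $a_M = N_M$, $b_M = \lceil r n_M \rceil$ is designed precisely to sidestep any comparison between the growth rates of $n_M$ and $N_M$ — otherwise one would need to match indices in $(\sigma^M)$ and $(\tau^M)$ carefully or extract subsequences — which is the only place a careless attempt could run into trouble.
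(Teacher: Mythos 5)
Your proof is correct and follows essentially the same route as the paper's: a direct sum of many copies of the two realizing sequences, with block multiplicities cross-calibrated by the sizes of the opposite sequence so that the two parts occupy the right proportions of the total length, and with the $O(k\cdot\text{number of blocks})$ boundary error vanishing after normalization. The only (cosmetic) difference is that the paper first uses closedness of $P_k$ to reduce to rational convex combinations, whereas you handle an arbitrary $\lambda\in(0,1)$ directly via the ceiling $b_M=\lceil r n_M\rceil$.
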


\begin{proof}
	Since $P_k$ is closed (this is an easy consequence of the fact that $P_k$ is a set of limit points) it is enough to consider rational convex combinations of points in $P_k$, i.e.\ it is enough to establish that for all $\vec{v}_1,\vec{v}_2\in P_k$ and all $ s,t\in\NN$, we have that
	\begin{equation*}
	\frac{s}{s+t}\vec{v}_1+\frac{t}{s+t}\vec{v}_2\in P_k.
	\end{equation*}
	For $\sigma\in\SS$, we define the vector $\pcoc_k( \sigma) \coloneqq (\pcoc(\pi,\sigma))_{\pi\in\SS_k}$. Fix $\vec{v}_1,\vec{v}_2\in P_k$ and $s,t\in\NN$. Since $\vec{v}_1,\vec{v}_2\in P_k$, there exist two sequences $(\sigma^m_1)_{m\in\NN}$, $(\sigma^m_2)_{m\in\NN}$ such that $|\sigma^m_i|\stackrel{m\to\infty}{\longrightarrow}\infty$ and $\pcoc_k( \sigma^m_i)\stackrel{m\to\infty}{\longrightarrow}\vec{v}_i$, for $i=1,2$.
	
	Define $t_m\coloneqq t\cdot |\sigma^m_1|$ and $s_m\coloneqq s\cdot |\sigma^m_2|$ and set $\tau^m$ to be equal to the direct sum\footnote{For $\tau\in\SS_m$ and $\sigma\in\SS_n$, the direct sum of $\tau$ and $\sigma$ is the permutation  $\tau(1)\dots\tau(m)(\sigma(1) + m)\dots(\sigma(n)+m)$.} of $s_m$ copies of $\sigma_1^m$ and $t_m$ copies of $\sigma_2^m$.
	
	We note that for every $\pi\in\SS_k$, we have
	\begin{equation*}
	\coc(\pi,\tau^m)=s_m\cdot\coc(\pi,\sigma^m_1)+t_m\cdot\coc(\pi,\sigma^m_2)+Er,
	\end{equation*}
	where $Er\leq(s_m+t_m-1)\cdot |\pi|$. This error term comes from the number of intervals of size $|\pi|$ that intersect the boundary of some copies of $\sigma^m_1$ or $\sigma^m_2$. Hence
	\begin{equation*}
	\begin{split}
	\pcoc(\pi,\tau^m)&=\frac{s_m\cdot|\sigma^m_1|\cdot\pcoc(\pi,\sigma^m_1)+t_m\cdot|\sigma^m_2|\cdot\pcoc(\pi,\sigma^m_2)+Er}{s_m\cdot|\sigma^m_1|+t_m\cdot |\sigma^m_2|}\\
	&=\frac{s}{s+t}\pcoc(\pi,\sigma^m_1)+\frac{t}{s+t}\pcoc(\pi,\sigma^m_2)+O\left(|\pi|\left(\tfrac{1}{|\sigma^m_1|}+\tfrac{1}{|\sigma^m_2|}\right)\right).
	\end{split}
	\end{equation*}
	As $m$ tends to infinity, we have $$\pcoc_k(\tau^m)\to\frac{s}{s+t}\vec{v}_1+\frac{t}{s+t}\vec{v}_2,$$ 
	since $|\sigma^m_i|\stackrel{m\to\infty}{\longrightarrow}\infty$ and $\pcoc_k( \sigma^m_i)\stackrel{m\to\infty}{\longrightarrow}\vec{v}_i$, for $i=1,2$. Noting also that
	$$|\tau^m|\to\infty,$$
	we can conclude that $\tfrac{s}{s+t}\vec{v}_1+\tfrac{t}{s+t}\vec{v}_2\in P_k$. This ends the proof.
\end{proof}

Using \cref{prop:P_n_is_covex} we can show that $P_k=P(\ValGraph[k])$ using a surjective correspondence between permutations of size $m+k-1$ and walks of length $m$ in the overlap graph $\ValGraph[k]$.
This correspondence is built in such a way that the vertices on the walk are in bijection with the consecutive patterns of size $k$ appearing in the corresponding permutation (for that we refer to \cite[Sections 3.3 and 3.4]{borga2019feasible}).

We use this correspondence as follows.
First we show that $P(\ValGraph[k])\subseteq P_k$: Thanks to \cref{prop:P_n_is_covex} it is enough to show that for each simple cycle $\mathcal{C}\in\ValGraph[k]$, the vector $\vec{e}_{\mathcal{C}}$ belongs to $P_k$. For that we consider the sequence of walks $(w_m)_{m\in\NN}$ obtained concatenating $m$ copies of $\mathcal{C}$, and we show that a corresponding sequence of permutations $(\sigma^m)_{m\geq 1}$ satisfies $\pcoc_k(\sigma^m) \to \vec{e}_{\mathcal{C}}$.
Second we show that $ P_k\subseteq P(\ValGraph[k])$: For any vector $\vec{v}\in P_k$, we consider a sequence of permutations $(\sigma^m)_{m\in\NN}$ such that $\pcoc_k(\sigma^m)\to\vec{v}$. This sequence of permutations corresponds to a sequence of walks in $\ValGraph[k]$.
By decomposing each of these walks into simple cycles (plus a small tail) we show that the original vector $\vec{v}$ is arbitrarily close to $P(\ValGraph[k])$.

Finally, the result on the dimension, the description of the vertices and the equations describing $P_k$ are a consequence of general results for cycle polytopes (see details for the computation of the dimension in \cref{sect:sec_res}, and for the remaining features in \cite{borga2019feasible}).

\section{Second main result: Dimension and faces of cycle polytopes}
\label{sect:sec_res}

In \cite{borga2019feasible}, we establish a full description of the cycle polytope of a general directed multigraph (see \cref{thm:dimfullgr} below) and of its face poset (see \cref{cor:facestruct} below).
We give here these descriptions omitting the proofs.

We say that a directed multigraph is \textit{full} if every edge is contained in a cycle.

\begin{theorem}\label{thm:dimfullgr}
If $G$ is a directed multigraph and $H\subseteq G$ is its largest full subgraph, then the dimension of the polytope $P(G)$ is
$$\dim P(G) = |E(H)| - |V(H)| + |\{ \text{ connected components of } H \} |- 1\, . $$

In particular, if $G$ is a strongly connected graph, then $\dim P(G) = |E(G)| - |V(G)|$.
\end{theorem}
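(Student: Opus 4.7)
The plan is to establish matching upper and lower bounds on the affine dimension of $P(G)$, after a reduction that lets us assume $G$ is already full.

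First I would reduce to the full case. Every simple cycle $\mathcal{C}$ of $G$ lies entirely within the largest full subgraph $H$, so $(\vec{e}_\mathcal{C})_e = 0$ whenever $e \in E(G) \setminus E(H)$. Hence $P(G)$ is contained in the coordinate subspace $\R^{E(H)} \times \{0\}^{E(G)\setminus E(H)}$ and is canonically identified with $P(H)$, so I may assume $G = H$ is full for the remainder.

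For the \emph{upper bound}, I would observe that each $\vec{e}_\mathcal{C}$ satisfies flow conservation at every vertex (a simple cycle enters and leaves each vertex equally often, either $0$ or $1$ times) and has coordinate sum $1$. Let $\mathcal{A} \subseteq \R^{E(G)}$ be the affine subspace defined by these $|V(G)|+1$ equations; then $P(G) \subseteq \mathcal{A}$. The flow equations at the vertices of a fixed weakly connected component of $G$ sum to zero, while equations coming from different components involve disjoint edges, so the flow equations have rank $|V(G)| - c$, where $c$ is the number of weakly connected components of $G$. The affine equation $\sum_{e} x_e = 1$ is independent from the homogeneous flow equations, yielding $\dim \mathcal{A} = |E(G)| - |V(G)| + c - 1$.

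For the matching \emph{lower bound}, the first step is to note that each weakly connected component of a full graph is in fact strongly connected: for any edge $u \to v$ in such a component, fullness yields a simple cycle through it, hence a directed path $v \rightsquigarrow u$; iterating along any undirected path between two vertices produces directed paths in both directions. Let $G_1, \dots, G_c$ be the strongly connected components of $G$. Within each $G_j$, a classical fact, provable by repeatedly extracting a simple cycle from the support of any nonnegative integer flow, says that the incidence vectors of simple cycles of $G_j$ linearly span the flow space of $G_j$, whose dimension is $|E(G_j)| - |V(G_j)| + 1$. Fixing a reference simple cycle $\mathcal{C}_j \subseteq G_j$ for each $j$, the differences $\vec{e}_\mathcal{C} - \vec{e}_{\mathcal{C}_j}$ for simple cycles $\mathcal{C} \subseteq G_j$ span a subspace of the tangent direction of $\mathcal{A}$ of dimension $|E(G_j)| - |V(G_j)|$, and the $c-1$ ``bridge'' directions $\vec{e}_{\mathcal{C}_j} - \vec{e}_{\mathcal{C}_1}$ for $j = 2, \dots, c$ are linearly independent from each other and from the intra-component differences (they have disjoint positive supports on distinct $E(G_j)$'s). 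Summing gives $\sum_{j=1}^{c}(|E(G_j)| - |V(G_j)|) + (c-1) = |E(G)| - |V(G)| + c - 1$, matching the upper bound. The strongly connected case then follows immediately, since strong connectivity forces fullness with $c = 1$.

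The main obstacle is the classical fact invoked in the lower bound, that the incidence vectors of \emph{simple} cycles, as opposed to arbitrary closed walks, already linearly span the flow space of a strongly connected digraph; once this is in hand, everything else is essentially bookkeeping about how to assemble the contributions of the different strongly connected components together with the $c-1$ affine ``bridge'' directions.
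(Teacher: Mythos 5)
Your proposal is correct, and it follows essentially the route the paper indicates: the extended abstract omits the proof, but states that it proceeds by generalizing the results of Gleiss, Leydold and Stadler, whose key content is exactly your ``classical fact'' that the incidence vectors of simple directed cycles span the flow space of a strongly connected digraph. Your reduction to the full case, the rank count for the flow-conservation plus normalization equations, and the assembly of intra-component differences with the $c-1$ bridge directions match the argument in the full version of the paper.
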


Additionally,  we are able to describe the equations that define $P(G)$ (for a precise statement see \cite[Proposition 2.6]{borga2019feasible}).
We highlight that \cref{thm:dimfullgr}, together with $P_k = P(\ValGraph[k]) $, yields the dimension of the feasible region $P_k$ stated in \cref{thm:main_res}.

The proof of \cref{thm:dimfullgr} follows from a generalization of some previous results of Gleiss, Leydold and Stadler \cite{MR2070155}. We refer the reader to \cite[Section 2.2]{borga2019feasible} for further details.

We also give an order-preserving bijection between faces of $P(G)$ and full subgraphs of $G$ (ordered by inclusion of edge sets), giving a description of the face poset of $P(G)$.

\begin{theorem}
	\label{cor:facestruct}
	The face poset of $P(G)$ is isomorphic to the poset of full subgraphs of $G$ according to the following identification:
	$$H \mapsto P(G)_H \coloneqq \{\vec{x}\in P(G) | x_e = 0 \text{ for } e\not\in E(H) \} \, .$$
	Further, if we identify $P(H)$ with its image under the canonical injection $\R^{E(H)} \hookrightarrow \R^{E(G)}$, we have that $P(H) =  P(G)_H$.
	
	In particular, $\dim P(G)_H =|E(H)| - |V(H)| + |\{ \text{ connected components of } H \} |- 1$.
\end{theorem}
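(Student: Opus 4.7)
The plan is to establish the bijection in several key steps; once this is done, the dimension formula follows by applying \cref{thm:dimfullgr} directly to $H$ itself (which is its own largest full subgraph).

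First I would verify that $P(G)_H$ coincides with $P(H)$ under the canonical embedding. This rests on the fact that $P(G)$ is the convex hull of its vertices $\vec{e}_{\mathcal{C}}$, that every such vertex satisfies $x_e \geq 0$, and the elementary principle that $\{x \in \conv V : \ell(x) = 0\} = \conv\{v \in V : \ell(v) = 0\}$ for any linear functional $\ell$ non-negative on $V$. Applying this to each $\ell(x) = x_e$ with $e \notin E(H)$ shows that $P(G)_H$ is the convex hull of those $\vec{e}_{\mathcal{C}}$ with $\mathcal{C}$ a simple cycle of $H$, which are exactly the generators of $P(H)$. Next I would show that each $P(G)_H$ is actually a face of $P(G)$: the linear functional $\phi(x) \coloneqq \sum_{e \in E(H)} x_e$ satisfies $\phi \leq \sum_e x_e = 1$ on $P(G)$, with equality precisely on $P(G)_H$, and non-emptiness (for non-empty full $H$) follows from fullness guaranteeing at least one simple cycle inside $H$.

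The hard part is showing every face $F$ of $P(G)$ arises as $P(G)_H$ for some full $H$. Given $F$, set $\mathbf{C}_F \coloneqq \{\mathcal{C} : \vec{e}_{\mathcal{C}} \in F\}$ and let $H$ be the subgraph with edge set $\bigcup_{\mathcal{C} \in \mathbf{C}_F} E(\mathcal{C})$; then $H$ is automatically full and the inclusion $F \subseteq P(G)_H$ is immediate. For the reverse inclusion, write $F$ as the argmax on $P(G)$ of some linear functional $\phi(x) = \sum_e w_e x_e$ with maximum value $M$, so that $\mathcal{C} \in \mathbf{C}_F$ iff $\mathcal{C}$ has mean $w$-weight equal to $M$. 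Setting $\bar{w}_e \coloneqq w_e - M$, every cycle of $G$ has non-positive $\bar{w}$-sum, so a standard LP-duality / Bellman--Ford argument (possibly after adding a dummy source) produces vertex potentials $y : V(G) \to \R$ with $\bar{w}_{uv} \leq y_v - y_u$ for every edge $u \to v$. For each $\mathcal{C} \in \mathbf{C}_F$, summing these inequalities telescopes around the cycle to $0 = \sum_{e \in \mathcal{C}} \bar{w}_e \leq 0$, forcing \emph{equality} edge-by-edge, and hence tightness on every edge of $H$. Any simple cycle $\mathcal{C}' \subseteq H$ therefore has $\bar{w}$-sum $0$, lies in $\mathbf{C}_F$, and yields a vertex of $F$; taking convex hulls gives $P(G)_H \subseteq F$.

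Finally, the assignments $H \mapsto P(G)_H$ and $F \mapsto H_F$ are mutually inverse by the previous paragraphs and both visibly preserve inclusion, yielding the claimed poset isomorphism. The dimension formula is then immediate from \cref{thm:dimfullgr} applied to the full graph $H$, and the identification $P(G)_H = P(H)$ proved above.
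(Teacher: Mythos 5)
Your proof is correct. Note, however, that this extended abstract does not actually contain a proof of the statement: it defers entirely to the full version \cite[Theorem 2.7]{borga2019feasible}, whose route (as the surrounding text indicates) passes through an explicit description of $P(G)$ by equations and the inequalities $x_e\ge 0$, after which faces are identified by turning a subset of those inequalities into equalities, building on the dimension machinery and on \cite{MR2070155}. Your argument is genuinely different and self-contained: it stays entirely with the V-representation $P(G)=\conv\{\vec{e}_{\mathcal C}\}$, gets $P(G)_H=P(H)$ from the elementary ``zero set of a nonnegative functional on a convex hull'' principle, and, for the hard inclusion $P(G)_H\subseteq F$, runs a min-mean-cycle/potential argument: after normalizing the maximizing functional so that every cycle has non-positive $\bar w$-weight, Bellman--Ford potentials $y$ with $\bar w_{uv}\le y_v-y_u$ force edge-by-edge tightness on every cycle generating $F$, hence on all of $E(H)$, hence on every simple cycle of $H$. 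This buys independence from the facet description at the cost of invoking LP duality. Two small points you should make explicit, neither of which is a gap: (i) the empty face is not the argmax of any linear functional and must be treated separately (it corresponds to the empty subgraph, and $P(G)_{\emptyset}=\emptyset$ because every point of $P(G)$ has coordinate sum $1$); (ii) the verification that $H\mapsto P(G)_H$ followed by $F\mapsto H_F$ returns $H$ uses that a full subgraph equals the union of its simple cycles, i.e., that an edge lying on some cycle lies on a simple one --- true, but worth a line, since it is exactly what makes the correspondence injective.
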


The above result is proved in \cite[Theorem 2.7]{borga2019feasible} and illustrated in \cref{fig:facestruct}. 

\begin{figure}
	\begin{minipage}[c]{0.49\textwidth}
		\centering
		\includegraphics[scale=0.45]{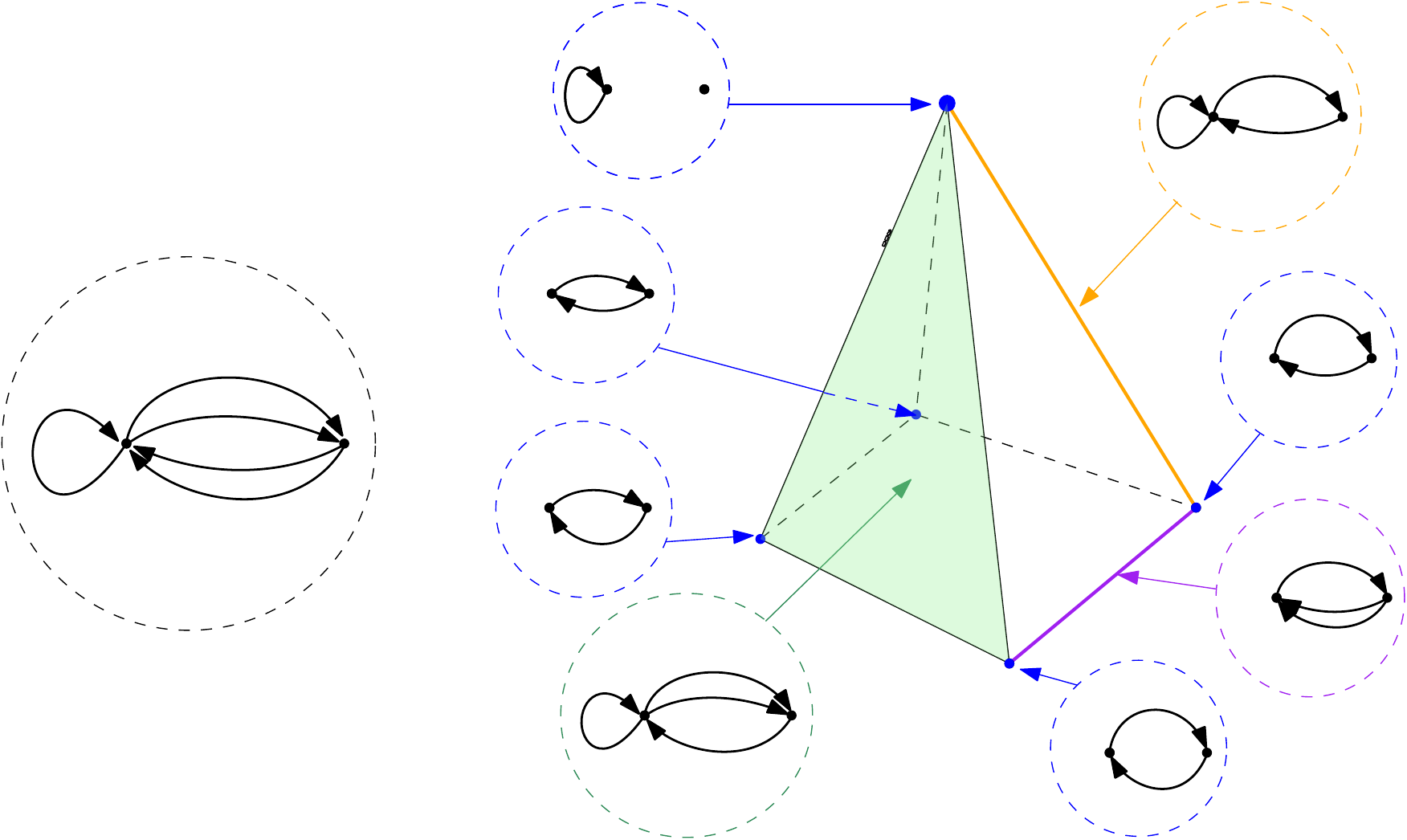}
	\end{minipage}
	\begin{minipage}[c]{0.49\textwidth}
		\caption{\textbf{Left:} Inside the dashed black ball, a graph $G$ with two vertices and five oriented edges, including a loop. \textbf{Right:} The associated cycle polytope $P(G)$, that is a pyramid with squared base. The blue dashed balls indicate the five vertices of the polytope, and correspond to the simple cycles. We also underline two edges of the polytope (in purple and orange respectively) and a face (in green), as well as their corresponding full subgraphs. \label{fig:facestruct}}
	\end{minipage}
\end{figure}

\section{Third main result: Mixing classical and consecutive patterns}
\label{sect:mixing}
We saw in \cref{sect:feas_reg} that the feasible region $clP_k$ for classical pattern occurrences has been studied in several papers. 
In this extended abstract, the feasible region $P_k$ of limiting points for consecutive pattern occurrences was described. A natural question is the following: what is the feasible region if we mix classical and consecutive patterns?
We reduce this question to the description of the original feasible regions separately in the following result.

\begin{theorem} [{\cite[Theorem 4.1]{borga2019feasible}}] \label{thm:mixing}
	For any two points $\vec{v}_1 \in P_k$, $ \vec{v}_2\in clP_k$, consider two sequences
	$(\sigma^m_1)_{m\in\NN}\in{\SS}^{\NN}$ and $(\sigma^m_2)_{m\in\NN}\in{\SS}^{\NN}$ such that 
	\begin{equation*}
	|\sigma^m_1| \to \infty,\quad \left(\pcoc(\pi, \sigma^m_1)\right)_{\pi\in\SS_k} \to \vec{v}_1,\quad\text{ and }\quad
	|\sigma^m_2| \to \infty,\quad \left(\poc(\pi, \sigma^m_2 )\right)_{\pi\in\SS_k} \to \vec{v}_2.
	\end{equation*}
	Then the sequence $(\sigma^m_3)_{m\in\NN}$ defined\footnote{An \emph{interval} in a permutation $\sigma=\sigma(1)\dots\sigma(n)$ is a factor $\sigma(i)\sigma(i+1)\dots\sigma(j)$, for $j\geq i$, such that the set $\{\sigma(i),\sigma(i+1),\dots,\sigma(j)\}$ forms an interval of $\NN$. If $\sigma$ is a permutation of size $n$ and $\tau_1,\dots,\tau_n$ are permutations, then the \emph{inflation} of $\sigma$ by $\tau_1,\dots,\tau_n$ (denoted by $\sigma[\tau_1,\dots,\tau_n]$) is the permutation with disjoint intervals $\tau'_1,\dots,\tau'_n$, each respectively isomorphic to $\tau_1,\dots,\tau_n$, from left to right, whose relative order is given by $\sigma$.} by $\sigma^m_3\coloneqq\sigma^m_2[\sigma^m_1,\dots,\sigma^m_1]$, for all $m\in\NN,$
	satisfies
	\begin{equation}\label{eq:goaloftheproof}
	|\sigma^m_3|\to \infty, \quad\left(\pcoc(\pi, \sigma^m_3)\right)_{\pi\in\SS_{k}} \to \vec{v}_1 \quad\text{and}\quad \left(\poc(\pi, \sigma^m_3 )\right)_{\pi\in\SS_k} \to \vec{v}_2.
	\end{equation}	
\end{theorem}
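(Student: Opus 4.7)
The plan is to analyze $\sigma^m_3 = \sigma^m_2[\sigma^m_1,\dots,\sigma^m_1]$ by exploiting the two natural scales of the inflation: it consists of $n_2 \coloneqq |\sigma^m_2|$ consecutive blocks, each an isomorphic copy of $\sigma^m_1$ of size $n_1 \coloneqq |\sigma^m_1|$. The size statement $|\sigma^m_3| = n_1 n_2 \to \infty$ is immediate from $|\sigma_1^m|,|\sigma_2^m|\to\infty$. The consecutive-pattern claim will come from the observation that a window of $k$ consecutive positions almost always lies inside a single block, so the local statistics are inherited from $\sigma^m_1$. Conversely, a uniformly chosen $k$-subset of positions almost always selects $k$ distinct blocks, in which case the induced pattern depends only on the pattern selected in $\sigma^m_2$; so the global statistics are inherited from $\sigma^m_2$.

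For the consecutive-pattern convergence, fix $\pi\in\SS_k$ and decompose $\coc(\pi,\sigma^m_3)$ into contributions from windows entirely contained in one of the $n_2$ blocks, and windows straddling at least one boundary. The first contribution is exactly $n_2\cdot \coc(\pi,\sigma^m_1)$. For $n_1 \geq k$ a straddling window meets at most two adjacent blocks and there are at most $k-1$ starting positions per boundary, so the second contribution is bounded by $(n_2-1)(k-1)$. Dividing by $|\sigma^m_3| = n_1 n_2$, this gives
\begin{equation*}
\pcoc(\pi,\sigma^m_3) \;=\; \pcoc(\pi,\sigma^m_1) \;+\; O\!\left(\tfrac{1}{n_1}\right),
\end{equation*}
and taking $m\to\infty$ yields the claim.

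For the classical-pattern convergence, fix $\pi\in\SS_k$ and split $\oc(\pi,\sigma^m_3)$ according to whether the $k$ chosen positions are distributed among $k$ distinct blocks or not. Any $k$-subset $\{i_1,\dots,i_k\}$ with one element per block is characterised by a choice of $k$ blocks $j_1<\dots<j_k$ in $\sigma^m_2$ plus a choice of one of the $n_1$ positions inside each block; moreover, because positions in an earlier block carry smaller values than positions in a later block (this is the defining property of inflation), the induced pattern on $\{i_1,\dots,i_k\}$ coincides with the pattern that $\sigma^m_2$ induces on $\{j_1,\dots,j_k\}$. The contribution to $\oc(\pi,\sigma^m_3)$ from these ``good'' subsets is therefore exactly $n_1^k \cdot \oc(\pi,\sigma^m_2)$. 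The ``bad'' subsets, where two of the $k$ indices share a block, are bounded above by $n_2\binom{n_1}{2}\binom{n_1n_2}{k-2} = O(n_1^k n_2^{k-1})$. Dividing by $\binom{n_1 n_2}{k} = \frac{(n_1 n_2)^k}{k!}(1+o(1))$ one obtains
\begin{equation*}
\poc(\pi,\sigma^m_3) \;=\; \frac{n_1^k\, \oc(\pi,\sigma^m_2)}{\binom{n_1 n_2}{k}} \;+\; O\!\left(\tfrac{1}{n_2}\right) \;=\; \poc(\pi,\sigma^m_2) \;+\; o(1),
\end{equation*}
which tends to $\vec v_2$ as $m\to\infty$.

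The hard part is nothing more than keeping the two asymptotic expansions honest: one needs $n_1\to\infty$ to drown the boundary error in the consecutive-pattern estimate, and $n_2\to\infty$ to drown the same-block error in the classical-pattern estimate. Both are guaranteed by the hypothesis $|\sigma^m_1|,|\sigma^m_2|\to\infty$. Combining the three displayed items gives exactly \eqref{eq:goaloftheproof}.
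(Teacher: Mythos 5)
Your proof is correct and follows essentially the same route as the paper: the classical-pattern limit is obtained by splitting $k$-subsets according to whether two indices share a block (the paper phrases this probabilistically, conditioning on the event $E^m$ that a uniform random $k$-set hits some block twice, with the same $O(k^2/|\sigma^m_2|)$ bound), and the consecutive-pattern limit comes from the block/boundary window decomposition that the paper delegates to the proof of \cref{prop:P_n_is_covex}. One small slip in a parenthetical: in an inflation the values of an earlier block are \emph{not} all smaller than those of a later block — they compare as $\sigma^m_2(j)$ compares to $\sigma^m_2(j')$ — but that is precisely what makes your actual conclusion (the induced pattern is the one $\sigma^m_2$ induces on the chosen blocks) correct.
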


This result shows a sort of independence between classical and consecutive patterns, in the sense that knowing the limiting proportion of classical patterns of a certain sequence of permutations imposes no constraints for the limiting proportion of consecutive patterns and \textit{vice versa}.

\begin{observation}
	In \cref{thm:carac_occ,thm:carac_cocc} we saw that the proportion of occurrences (resp.\ consecutive occurrences) in a sequence of permutations $(\sigma^m)_{m\in\NN}$ characterizes the permuton limit (resp.\ Benjamini--Schramm limit) of the sequence. From \cref{thm:mixing}, we can construct a sequence of permutations where the permuton limit is the decreasing diagonal and the Benjamini--Schramm limit is the classical increasing total order on the integer numbers.
	
	We remark that a particular instance of this ``independence phenomenon'' for local/scaling limits of permutations was recently observed by Bevan, who pointed out in \cite{bevan2019permutations} that ``the knowledge of the local structure of uniformly random permutations with a specific fixed proportion of inversions reveals nothing about their global form''. Here, we prove that this is a \emph{universal phenomenon} which is not specific to the framework studied by Bevan.
\end{observation}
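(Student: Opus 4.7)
The plan is to instantiate Theorem~\ref{thm:mixing} with two explicit sequences whose limits realize respectively the desired local and scaling limits, and then translate back through Theorems~\ref{thm:carac_occ} and~\ref{thm:carac_cocc}. For the consecutive-pattern input I would take $\sigma^m_1 = 12\cdots m$, the increasing permutation of size $m$. Every window of length $k$ inside $\sigma^m_1$ induces the pattern $12\cdots k$, so $\pcoc(12\cdots k,\sigma^m_1) \to 1$ and $\pcoc(\pi,\sigma^m_1) \to 0$ for all other $\pi \in \SS_k$; the resulting vector $\vec{v}_1$ is, by Theorem~\ref{thm:carac_cocc}, the one determined by the deterministic infinite rooted permutation whose total order on $\Z$ is the standard one. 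For the classical-pattern input I would take $\sigma^m_2 = m(m{-}1)\cdots 21$, the decreasing permutation, so that $\poc(k(k{-}1)\cdots 1,\sigma^m_2) \to 1$ and all other proportions vanish; by Theorem~\ref{thm:carac_occ} the associated vector $\vec{v}_2$ identifies the permuton limit as the antidiagonal permuton (the pushforward of Lebesgue measure on $[0,1]$ under $x \mapsto (x,1-x)$).

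Next I would feed these two sequences into Theorem~\ref{thm:mixing} and form $\sigma^m_3 \coloneqq \sigma^m_2[\sigma^m_1,\dots,\sigma^m_1]$, which geometrically is a stack of $m$ increasing blocks of length $m$, each block sitting strictly below the preceding one in value. The theorem guarantees that
\begin{equation*}
\bigl(\pcoc(\pi,\sigma^m_3)\bigr)_{\pi\in\SS_k} \to \vec{v}_1,\qquad \bigl(\poc(\pi,\sigma^m_3)\bigr)_{\pi\in\SS_k} \to \vec{v}_2,
\end{equation*}
for every fixed $k$. Applying the characterizations in the reverse direction, the first convergence combined with Theorem~\ref{thm:carac_cocc} forces $\sigma^m_3$ to BS-converge to the infinite rooted permutation given by the standard order on $\Z$, while the second combined with Theorem~\ref{thm:carac_occ} forces $\sigma^m_3$ to permuton-converge to the antidiagonal permuton.

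The only subtle point, and the main thing to verify, is that the vectors $\vec{v}_1$ and $\vec{v}_2$ prescribed above really are admissible inputs for Theorem~\ref{thm:mixing}, i.e.\ that $\vec{v}_1 \in P_k$ and $\vec{v}_2 \in clP_k$ simultaneously for all $k$. For $\vec{v}_1$ this is automatic from the definition of $P_k$ with the witnessing sequence $(\sigma^m_1)$, and likewise for $\vec{v}_2$ with $(\sigma^m_2)$; moreover Theorem~\ref{thm:mixing} is stated for a fixed $k$, so a short additional remark is needed to pass to the joint conclusion for all $k$ simultaneously. This is handled by a diagonal argument: since there are countably many patterns in $\SS = \bigcup_k \SS_k$, one can take a single sequence $(\sigma^m_3)$ and observe that the convergences in~\eqref{eq:goaloftheproof} hold coordinate-wise for every $\pi$, which is exactly what the hypotheses of Theorems~\ref{thm:carac_occ} and~\ref{thm:carac_cocc} require.
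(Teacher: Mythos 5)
Your construction is exactly the one the paper intends: the observation is stated as an immediate consequence of \cref{thm:mixing}, with the increasing permutations $12\cdots m$ feeding the consecutive-pattern slot and the decreasing permutations $m(m-1)\cdots 1$ feeding the classical-pattern slot, and your identification of the resulting limits via \cref{thm:carac_occ,thm:carac_cocc} is correct. Your closing remark about handling all $k$ simultaneously is also right, though it is even simpler than a diagonal argument, since the single sequence $\sigma^m_3=\sigma^m_2[\sigma^m_1,\dots,\sigma^m_1]$ does not depend on $k$ and \cref{eq:goaloftheproof} therefore holds coordinate-wise for every pattern $\pi\in\SS$ at once.
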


\begin{proof}[Proof of \cref{thm:mixing}]
The size of $\sigma^m_3$ trivially tends to infinity.
We skip the proof of the limit $\left(\pcoc(\pi, \sigma^m_3)\right)_{\pi\in\SS_{k}} \to \vec{v}_1$ since it involves similar techniques as the ones used in the proof of \cref{prop:P_n_is_covex}.
Finally, for the limit $\left(\poc(\pi, \sigma^m_3 )\right)_{\pi\in\SS_k} \to \vec{v}_2,$ note that setting $n=|\sigma^m_3|$ and $k=|\pi|$,
\begin{equation}
\label{eq:step_1}
	\poc(\pi,\sigma^m_3) = \frac{\occ(\pi,\sigma^m_3)}{\binom{n}{k}} \, 
	=\mathbb{P} \left(\pat_{\bm I}(\sigma^m_3)=\pi \right),
\end{equation}
where $\bm I$ is a random set, uniformly chosen among the $\binom{n}{k}$ subsets of $[n]$ with $k$ elements (we denote random quantities in \textbf{bold}). Let $E^m$ be the event that the random set $\bm I$ contains two indices $\bm i,\bm j$ of $[|\sigma^m_3|]$ that belong to the same copy of $\sigma^m_1$ in $\sigma^m_3$. We have that $\mathbb{P} \left(\pat_{\bm I}(\sigma^m_3)=\pi \right)$ rewrites as
\begin{equation}
\label{eq:step_2}
	\mathbb{P} \left(\pat_{\bm I}(\sigma^m_3)=\pi |E^m\right)\cdot\mathbb{P}\left(E^m\right)+\mathbb{P} \left(\pat_{\bm I}(\sigma^m_3)=\pi |(E^m)^C\right)\cdot\mathbb{P}\left((E^m)^C\right),
\end{equation}
where $(E^m)^C$ denotes the complement of the event $E^m$. We claim that
\begin{equation}
\label{eq:step_3}
	\mathbb{P}\left(E^m\right)\leq k(k-1)/(2\cdot|\sigma^m_2|)\to0.
\end{equation}
Indeed, the factor $k(k-1)/2$ counts the number of pairs $i,j$ in a set of cardinality $k$ and the factor $\frac{1}{|\sigma^m_2|}$ is an upper bound for the probability that given a uniform two-element set $\left\{\bm i,\bm j\right\}$ then $\bm i, \bm j$ belong to the same copy of $\sigma^m_1$ in $\sigma^m_3$ (recall that there are $|\sigma^m_2|$ copies of $\sigma^m_1$ in $\sigma^m_3$). Note also that 
\begin{equation}
\label{eq:step_4}
\mathbb{P} \left(\pat_{\bm I}(\sigma^m_3)=\pi |(E^m)^C\right)=\poc(\pi,\sigma^m_2)\to \vec{v}_2.
\end{equation}
 Using \cref{eq:step_1,eq:step_2,eq:step_3,eq:step_4}, we obtain that $\left(\poc(\pi, \sigma^m_3 )\right)_{\pi\in\SS_k} \to \vec{v}_2.$ 
\end{proof}
\bibliographystyle{abbrv}
\bibliography{bibli}
\end{document}